\pgfplotsset{compat=newest}
\pgfplotsset{ytick style={draw=none}}
\pgfplotsset{xtick style={draw=none}}
\numberwithin{figure}{section}
\theoremstyle{plain}
\newtheorem{thm}{Theorem}[section]
\newtheorem{lem}[thm]{Lemma}
\newtheorem{prop}[thm]{Proposition}
\theoremstyle{definition}
\newtheorem{defn}{Definition}[section]
\newtheorem{conj}{Conjecture}[section]
\newtheorem{exmp}{Example}[section]
\theoremstyle{remark}
\newtheorem{rem}{Remark}
\author[M. A. Sarkar]{MABUD ALI SARKAR}
\address{Department of Mathematics\\ The University of Burdwan \\ Burdwan-713101, India.}
\email{mabudji@gmail.com}
\author[A. A. Shaikh]{Absos Ali Shaikh}
\address{Department of Mathematics\\ The University of Burdwan \\ Burdwan-713101, India.}
\email{aashaikh@math.buruniv.ac.in}
\title{Rigidity and unlikely intersection for stable $p$-adic dynamical systems}
\begin{document}

	\begin{abstract}
		Berger asked the question \enquote{To what extent the preperiodic points of a stable $p$-adic power series determines a stable $p$-adic dynamical system} ? In this work we have applied the preperiodic points of a stable $p$-adic power series in order to determine the corresponding stable $p$-adic dynamical system. 
	\end{abstract}
	\subjclass[2020]{11S82,~11S31,~11F85,~13J05,~37P35}
	\keywords{Nonarchimedean dynamical system; power series; preperiodic points; formal group; isogeny; rigidity; unlikely intersections.}
	\maketitle

	\section{Introduction and motivation}
	Let $K$ be the finite extension of the $p$-adic field $ \mathbb{Q}_p$ with ring of integers $\mathcal{O}_K$, and the unique maximal ideal $\mathfrak{m}_K$. We denote the units in $\mathcal{O}_K$ by $\mathcal{O}_K^{*}$. Let $\bar K$ be the algebraic closure of $K$ and $\bar{\mathfrak{m}}_K$ be the integral closure of $\mathfrak{m}_K$ in $\bar{K}$. Let $C_p$ be the $p$-adic completion of $\bar{K}$ and denote $\mathfrak{m}_{C_p}=\{z \in C_p~|~ |z|_p<1 \}$.
	
	In \cite{LB1}, Berger studied to what extent the torsion points $\text{Tors}(F)$ of a formal group $F$ over $\mathcal{O}_K$ determines the formal group. He proved that if $\text{Tors}(F_1) \cap \text{Tors}(F_2)$ is infinite then $F_1=F_2$. He further asked the question, if $\mathcal{D}$ is a stable $p$-adic dynamical system, then: 
	
	\hspace*{2cm}\enquote{To what extent the preperiodic points $\text{Preper}(\mathcal{D})$ determines $\mathcal{D}$ ?}

	In this work, we have answered this question by proving our main Theorem \ref{t3.4} in Section \ref{s3}. We have also provided an alternate proof of it following some examples in Section \ref{s4}. The proofs relies on the following tools:
	\begin{enumerate}
		\item[(a)] The first proof uses the correspondence between $\text{Tors}(F)$ and $\text{Preper}(\mathcal{D})$.
		\item[(b)] 
		while the alternate proof uses the following two facts:
		\begin{enumerate}
			\item[(i)] Galois correspondence of a stable $p$-adic dynamical system $\mathcal{D}$. Indeed, we proved that given any stable $p$-adic dynamical system $\mathcal{D}$ over $\mathcal{O}_K$, there exists a $\sigma \in \text{Gal}(\bar{K}/K)$ and a stable series $w(x) \in \mathcal{D}$ such that   
			$	\sigma(x)=w(x),\ \forall \ x \in \text{Preper}(\mathcal{D}).$
			\item[(ii)] Rigidity of power series on open unit disk $\mathfrak{m}_{C_p}$. We say that a subset $\mathcal{Z} \subset \mathfrak{m}_{C_p}$ is Zariski dense in $\mathfrak{m}_{C_p}$ if every power series $h(x) \in \mathcal{O}_K[[x]]$ that vanishes on $\mathcal{Z}$ is necessarily equal to zero. A subset $\mathcal{Z} \subset \mathfrak{m}_{C_p}$ is Zariski dense in $\mathfrak{m}_{C_p}$ if and only if it is infinite.
		\end{enumerate}
	\end{enumerate}

	\section{$p$-adic dynamical system and some results}
	In this section, we recall some prelimineries, and prove some helpful results:
	\begin{defn} \cite{Sil}
		A (discrete) \textit{dynamical system} consists of a set $\Gamma$ and a function $\gamma: \Gamma \to \Gamma$. Its dynamics is indeed the study of the behavior of the points in $\Gamma$ by repeatedly applying $\gamma$ on the points of $\Gamma$ i.e., we study the iterates of $\gamma$. If we consider the $n^{\text{th}}$ iterate $$\gamma^{\circ n}(x)=\underbrace{\gamma \circ \gamma \circ \cdots \circ \gamma(x)}_{\text{n iterates}},$$
		then the orbit of $x \in \Gamma$ is defined by $O_{\gamma}(x)=\left\{x,~\gamma(x),~\gamma^{\circ 2}(x),~\gamma^{\circ 3}(x), \cdots \right\}.$
		\begin{enumerate}
			\item[(i)]The point $x$ is called \textit{periodic} of period $n$ if $\gamma^{\circ n}(x)=x$ for some $n \geq 1$.
			\item[(ii)] If $\gamma(x)=x$, then $x$ is a fixed point.
			\item[(iii)] A point $x$ is \textit{preperiodic} if some iterate $\gamma^{\circ i}(x)$ is periodic i.e., there exists $m,n$ such that $\gamma^{\circ m}(x)=\gamma^{\circ n}(x)$. In other words, $x$ is preperiodic if its orbit $O_{\gamma}(x)$ is finite.  
		\end{enumerate}
	\end{defn}
	
	\begin{defn} \cite{LB1}
		A stable $p$-adic dynamical system $\mathcal{D}$ over $\mathcal{O}_K$ is a collection of $p$-adic power series in $\mathcal{O}_K[[x]]$ without constant term such that the power series commutes with each other under formal composition. A power series $f$ in $\mathcal{D}$ is called stable if $f'(0)$ is neither $0$ nor a root of $1$.
		We say that $\mathcal{D} \subseteq x \cdot \mathcal{O}_K[[x]]$ is a stable $p$-adic dynamical system of finite height if the elements of $\mathcal{D}$ commute with each other under composition, and if $\mathcal{D}$ contains a stable series $f$ such that $f'(0) \in \mathfrak{m}_K$ and $f(x) \not\equiv 0$ mod $\mathfrak{m}_K$ (i.e., $f$ is of finite height) as well as a stable series $u$ such that $u'(0) \in \mathcal{O}_K^{\times}$.  The collection $\mathcal{D}$ can be made as large as possible in the sense that whenever a stable power series commutes with any member of $\mathcal{D}$, it belongs to $\mathcal{D}$. 
		Such a collection $\mathcal{D}$ is the main object in $p$-adic dynamical systems \cite{JL}.
	\end{defn}
	\begin{exmp} \label{e2.1}
		If $F$ is a formal group law of finite height over $\mathcal{O}_K$, then the endomorphism ring $\text{End}_{\mathcal{O}_K}(F)$ of $F$ is a stable $p$-adic dynamical system. 
	\end{exmp} 
	
	\begin{prop} \label{p2.1}
		For an invertible power series, preperiodic points are exactly the periodic points, i.e., fixed points of iterates of $u$.   
	\end{prop}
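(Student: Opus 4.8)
The plan is to exploit the single distinguishing feature of an invertible power series, namely that $u'(0) \in \mathcal{O}_K^{\times}$, which guarantees that $u$ admits a compositional inverse. One direction is immediate: by definition every periodic point is preperiodic, since $u^{\circ n}(x)=x$ forces the orbit $O_u(x)$ to be finite. Hence the entire content of the statement is the reverse inclusion, that for invertible $u$ every preperiodic point is in fact periodic.

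First I would record the standard fact that, because $u(x)\in x\cdot\mathcal{O}_K[[x]]$ with $u'(0)\in\mathcal{O}_K^{\times}$, there is a unique compositional inverse $u^{-1}(x)\in x\cdot\mathcal{O}_K[[x]]$ satisfying $u\circ u^{-1}=u^{-1}\circ u=x$; this follows by solving for the coefficients recursively, the leading unit $u'(0)$ ensuring each step is solvable over $\mathcal{O}_K$. Since $u$ maps the open unit disk $\mathfrak{m}_{C_p}$ bijectively to itself and so does $u^{-1}$, the iterates $u^{\circ n}$ are genuine bijections of the domain on which the preperiodic points live, with inverse $(u^{-1})^{\circ n}=(u^{\circ n})^{-1}$.

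Next I would take an arbitrary preperiodic point $x$. By the definition of preperiodicity there exist integers $m>n\geq 0$ with $u^{\circ m}(x)=u^{\circ n}(x)$. The key step is then to cancel the common $u^{\circ n}$: applying the bijection $(u^{\circ n})^{-1}=(u^{-1})^{\circ n}$ to both sides yields $u^{\circ (m-n)}(x)=x$, so $x$ is periodic of period dividing $m-n\geq 1$. This shows $\mathrm{Preper}(u)\subseteq\mathrm{Per}(u)$, and combined with the trivial inclusion gives equality.

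I do not expect a serious obstacle here, as the argument is essentially the observation that invertibility upgrades the eventual collision $u^{\circ m}(x)=u^{\circ n}(x)$ into an exact return to $x$. The only points requiring care are the two supporting facts just isolated: the existence of the compositional inverse in $\mathcal{O}_K[[x]]$ (which is where the hypothesis $u'(0)\in\mathcal{O}_K^{\times}$ is used) and the injectivity of $u^{\circ n}$ on $\mathfrak{m}_{C_p}$ that legitimizes the cancellation. Both are routine once the convergence of $u$ and $u^{-1}$ on the open unit disk is noted.
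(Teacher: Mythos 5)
Your argument is correct and is essentially the paper's own proof: both take a preperiodic point with $u^{\circ m}(\alpha)=u^{\circ n}(\alpha)$ for $m>n$ and use the compositional inverse of $u$ (which exists since $u'(0)\in\mathcal{O}_K^{\times}$) to cancel $u^{\circ n}$ and conclude $u^{\circ(m-n)}(\alpha)=\alpha$. Your write-up just makes explicit the routine supporting facts (existence of $u^{-1}$ in $x\cdot\mathcal{O}_K[[x]]$ and injectivity of the iterates) that the paper leaves implicit.
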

	\begin{proof}
		Let $u(x) \in x \cdot \mathcal{O}_K[[x]]$ be invertible. For any preperiodic point $\alpha$ of $u(x)$, there exists natural numbers $m,n$ with $m>n$ such that $u^{\circ m }(\alpha)=u^{\circ n }(\alpha)$. Since $u(x)$ is invertible, $u^{\circ (-n)}(x)$ exists in $x \cdot \mathcal{O}_K[[x]]$ and hence $u^{\circ (m-n)}(\alpha)=\alpha$. 
	\end{proof}
	If $u$ is an invertible series over $\mathcal{O}_K$, then the preperiodic points of $u$ are exactly the periodic points by Proposition \ref{p2.1}. Now we remember that: the only periodic points of $u$ are roots of $u^{\circ p^m}(x)-x$ for some $m \in \mathbb{N}$. The full ring $\mathbb{Z}_p$ acts on the invertible members of the dynamical system $\mathcal{D}$. For, the series $u^{\circ p^m}$ converge to the identity in the appropriate topology, and thus the map $\mathbb{Z} \to \mathcal{D}$ by $n \mapsto u^{\circ n}$ is continuous when $\mathbb{Z}$ has the $p$-adic topology, so extends to $\mathbb{Z}_p \to \mathcal{D}$. It follows from this that if $ m \in \mathbb{Z}$ and $m=p^r n$ with $p \nmid n$, then the fixed points of $u^{\circ m}$ are the fixed points of $u^{\circ p^r}$. To be more precise, we consider the following two lemmas:

	\begin{lem}
		Let $u$ be an invertible series in $\mathcal{O}_K[[x]]$. Then for every natural number $n \geq 0$, for any $ \lambda \in \bar{K}$ with $v(\lambda)>0$, if $\lambda$ is a fixed point of $u$, then $\lambda$ is also a fixed point of $u^{\circ p^n}$.  
	\end{lem}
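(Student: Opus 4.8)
The plan is to observe that the statement reduces to the elementary fact that a fixed point of any self-map is automatically a fixed point of every iterate of that map; the only genuine thing to check is that the orbit of $\lambda$ under $u$ stays inside the open unit disk, so that all the compositions $u^{\circ k}(\lambda)$ are actually well-defined $p$-adically.

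First I would record the convergence and stability point. Since $u \in x \cdot \mathcal{O}_K[[x]]$ has no constant term and all its coefficients $a_i$ lie in $\mathcal{O}_K$, for any $\lambda$ with $v(\lambda)>0$ the series $u(\lambda)=\sum_{i \geq 1} a_i \lambda^i$ converges in $C_p$, and by the ultrametric inequality $v(u(\lambda)) \geq \min_{i \geq 1}\bigl(v(a_i)+i\,v(\lambda)\bigr) \geq v(\lambda) > 0$, using $v(a_i) \geq 0$ and $i \geq 1$. Hence $u$ maps the open unit disk into itself, and consequently every iterate $u^{\circ k}$ is defined at $\lambda$ and again lands in the open unit disk.

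Next I would run a short induction on $k$ to show that $u^{\circ k}(\lambda)=\lambda$ for every $k \geq 1$. The base case $k=1$ is precisely the hypothesis $u(\lambda)=\lambda$. For the inductive step, assuming $u^{\circ k}(\lambda)=\lambda$, I compute $u^{\circ(k+1)}(\lambda)=u\bigl(u^{\circ k}(\lambda)\bigr)=u(\lambda)=\lambda$, where the first equality is the definition of iterated composition and the middle one invokes the inductive hypothesis. Specializing to $k=p^n$ then yields $u^{\circ p^n}(\lambda)=\lambda$, which is the assertion.

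I do not expect any real obstacle in this direction: the conclusion in fact holds with any natural number in place of $p^n$, and invertibility of $u$ is never used here. The hypotheses $v(\lambda)>0$ and $u \in \mathcal{O}_K[[x]]$ enter only to guarantee that the iterated compositions make sense over $C_p$, and that is exactly what the convergence step in the first paragraph secures.
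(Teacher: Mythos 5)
Your proof is correct and takes essentially the same approach as the paper's, which likewise just notes $u^{\circ 2}(\lambda)=u(u(\lambda))=u(\lambda)=\lambda$ and concludes by induction on the number of iterates. The only difference is your preliminary verification that $u$ maps the open unit disk to itself so that all iterates converge at $\lambda$ --- a point the paper's one-line proof leaves implicit --- and your (accurate) remark that invertibility and the exponent $p^n$ play no role.
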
   
	\begin{proof}
		Note that $u^{\circ 2}(\lambda)=u(u(\lambda))=u(\lambda)=\lambda$.
		Thus by induction on $n$, the result follows.
	\end{proof}
	
	\begin{lem}
		Let $u$ be an invertible series in $\mathcal{O}_K[[x]]$. Then for every natural number $n \geq 0$, for any $ \lambda \in \bar{K}$ with $v(\lambda)>0$, if $\lambda$ is a fixed point of $u$, then $\lambda$ is also a fixed point of $u^{\circ p^n}$. More generally, for $ z \in \mathbb{Z}_p$, $\lambda$ is also a fixed point of $u^{\circ z}$.
	\end{lem}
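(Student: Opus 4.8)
The plan is to split the statement into its two assertions: the special case $z = p^n$ for a natural number $n$, and the general case $z \in \mathbb{Z}_p$. The first is essentially a restatement of the preceding lemma and follows by a direct induction, so the real content lies in passing from $\mathbb{Z}$ to $\mathbb{Z}_p$ by a density-and-continuity argument.

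First I would dispose of the integer case. If $u(\lambda) = \lambda$, then for any positive integer $k$ one has $u^{\circ k}(\lambda) = u^{\circ(k-1)}(u(\lambda)) = u^{\circ(k-1)}(\lambda)$, so by induction $u^{\circ k}(\lambda) = \lambda$; taking $k = p^n$ yields the first claim. Because $u$ is invertible in $x\cdot\mathcal{O}_K[[x]]$, applying $u^{\circ(-1)}$ to $u(\lambda)=\lambda$ shows $u^{\circ(-1)}(\lambda)=\lambda$ as well, so in fact $\lambda$ is fixed by $u^{\circ m}$ for every $m \in \mathbb{Z}$, not merely the positive ones.

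Next I would extend to $\mathbb{Z}_p$ using the continuous action described in the paragraph preceding these lemmas: the map $\mathbb{Z} \to \mathcal{D}$, $m \mapsto u^{\circ m}$, is continuous for the $p$-adic topology on $\mathbb{Z}$ and hence extends to $\mathbb{Z}_p \to \mathcal{D}$, $z \mapsto u^{\circ z}$. Given $z \in \mathbb{Z}_p$, I would pick a sequence of integers $m_j \to z$ $p$-adically, so that $u^{\circ m_j} \to u^{\circ z}$ in $\mathcal{D}$, and then evaluate at $\lambda$: since $v(\lambda) > 0$ the point $\lambda$ lies in the open unit disk where these series converge, and coefficientwise convergence of $u^{\circ m_j}$ to $u^{\circ z}$ forces $u^{\circ m_j}(\lambda) \to u^{\circ z}(\lambda)$. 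Each term equals $\lambda$ by the integer case, so the limit gives $u^{\circ z}(\lambda) = \lambda$.

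The hard part will be making this last continuity step rigorous, i.e. checking that evaluation at a fixed $\lambda$ with $v(\lambda) > 0$ is continuous for the topology on $\mathcal{D}$ in which $u^{\circ p^m} \to \mathrm{id}$. I would pin down that this topology is at least as fine as coefficientwise convergence in $\mathcal{O}_K[[x]]$, and then estimate directly: if two series have coefficients agreeing modulo a high power of $\mathfrak{m}_K$, their values at $\lambda$ differ by a quantity of large valuation, since the contribution of the tail $\sum_i a_i \lambda^i$ is controlled by $|\lambda|^i \to 0$. Establishing this uniform convergence, and confirming that the extension $z \mapsto u^{\circ z}$ really lands in series to which $\lambda$ may legitimately be substituted, is the only genuine subtlety; the remainder of the argument is formal.
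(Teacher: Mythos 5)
Your proposal is correct and follows essentially the same route as the paper: the integer case by induction (which is the paper's preceding lemma), then passage to $\mathbb{Z}_p$ by choosing integers $z_i \to z$ and using continuity of the map $n \mapsto u^{\circ n}$ together with continuity of evaluation at $\lambda$ on $\mathcal{O}_K[[x]]$. The only difference is cosmetic: the paper simply asserts that evaluation at $\lambda$ is continuous for the $(\mathfrak{m}_K, x)$-adic topology, whereas you sketch the estimate justifying it.
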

	\begin{proof}
		We recall that the map $\mathbb{Z} \to \mathcal{O}_K[[x]]$ by $n \mapsto u^{\circ n}$ is continuous when $\mathbb{Z}$ has the $p$-adic topology and $\mathcal{O}_K[[x]]$ has $(\mathfrak{m}_K,~x)$-adic topology. This latter topology also has the property that if $\{U_i\},~U$ are invertible series in $\mathcal{O}_K[[x]]$ with limit $\lim_i U_i=U$,then $\lim_iU_i(\lambda)=U(\lambda)$. That is, evaluation at $\lambda$ is a continuous map from $\mathcal{O}_K[[x]]$ to $\{\xi \in \bar{K}: v(\xi)>0 \}$.
		
		Now suppose that $\lambda$ is a fixed point of $u$, and $z \in \mathbb{Z}_p$. There is a sequence of positive integers $\{z_i\}$ with limit $z$, and so $\lambda$ is a fxed point of each $u^{\circ z_i}$, so that $u^{\circ z}(\lambda)=u^{\circ \lim_i z_i}(\lambda)=\lim_i \left(u^{\circ z_i}(\lambda) \right)=\lim_i \lambda=\lambda$. 
	\end{proof}

	We define the following two sets 
	\begin{eqnarray*}
		\text{Preper}(u)&=\bigcup_{n}\{x \in \mathcal{O}_K | u^{\circ p^n}(x)=x  \} =\text{all preperiodic points of an invertible series $u \in \mathcal{D}$} \\
		T(f)&=\bigcup_{n}\{x \in \bar{\mathfrak{m}}_K | f^{\circ n}(x)=0  \}
		=\text{all torsion points of an noninvertible series $f \in \mathcal{D}$}.
	\end{eqnarray*}
	We note the following interesting result, which says that $\text{Preper}(\mathcal{D})$ is independent of choices of stable series in $\mathcal{D}$:
	\begin{prop} \cite{JL} \label{p2.4}
		Let $u, f \in \mathcal{D}$ be invertible and noninvertible series respectively. Then the set of roots of iterates of $f$ is equal to the set periodic points of $u(x)$. That is, if $T(f)$ denotes the set of roots of iterates of $f$, then $T(f)=\text{Preper}(u).$
	\end{prop}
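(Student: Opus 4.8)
The plan is to prove the two inclusions $T(f)\subseteq\text{Preper}(u)$ and $\text{Preper}(u)\subseteq T(f)$ separately, using only the commutativity of $u$ and $f$ inside $\mathcal{D}$, the finiteness of the zero set of any nonzero element of $\mathcal{O}_K[[x]]$ in the open disk (Weierstrass preparation), and the continuity of the $\mathbb{Z}_p$-action established in the second Lemma above. No formal-group machinery seems necessary for this elementary route.

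For $T(f)\subseteq\text{Preper}(u)$, I would fix $\alpha$ with $f^{\circ n}(\alpha)=0$ and first observe that the zero set $R_n=\{x\in\bar{\mathfrak{m}}_K : f^{\circ n}(x)=0\}$ is finite: since $f$ has finite height, $f^{\circ n}\not\equiv 0 \bmod \mathfrak{m}_K$ (its linear coefficient is $f'(0)^n\neq 0$), so Weierstrass preparation bounds the number of zeros. Because $u\circ f=f\circ u$ gives $u\circ f^{\circ n}=f^{\circ n}\circ u$, evaluating $f^{\circ n}$ at $u(\alpha)$ shows $u(R_n)\subseteq R_n$; as $u$ is invertible this is a permutation of the finite set $R_n$, so $u^{\circ m}(\alpha)=\alpha$ for some $m\geq 1$. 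Writing $m=p^r n'$ with $p\nmid n'$, I would apply the second Lemma to the invertible series $u^{\circ m}$, which fixes $\alpha$, to conclude that $\alpha$ is fixed by $(u^{\circ m})^{\circ z}$ for every $z\in\mathbb{Z}_p$; taking $z=1/n'\in\mathbb{Z}_p^{\times}$ gives $u^{\circ p^r}(\alpha)=\alpha$, so $\alpha\in\text{Preper}(u)$.

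For the reverse inclusion $\text{Preper}(u)\subseteq T(f)$, let $\alpha$ be periodic for $u$, say $u^{\circ m}(\alpha)=\alpha$. Here $u^{\circ m}\neq\mathrm{id}$, since $(u^{\circ m})'(0)=u'(0)^m\neq 1$ because the stability of $u$ forbids $u'(0)$ from being a root of unity; hence $u^{\circ m}(x)-x$ is a nonzero series and its fixed-point set $\text{Fix}(u^{\circ m})$ is finite. Commutativity again gives $u^{\circ m}\bigl(f^{\circ k}(\alpha)\bigr)=f^{\circ k}\bigl(u^{\circ m}(\alpha)\bigr)=f^{\circ k}(\alpha)$ for all $k$, so the whole forward orbit $\{f^{\circ k}(\alpha)\}_{k\geq 0}$ lies in the finite set $\text{Fix}(u^{\circ m})$; thus $\alpha$ is preperiodic for $f$ and its orbit terminates in an $f$-periodic point $\gamma$. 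The decisive step is to show $\gamma=0$: writing $f(x)=\sum_{i\geq 1}a_ix^i$ with $a_1=f'(0)\in\mathfrak{m}_K$ and $a_i\in\mathcal{O}_K$, every term satisfies $|a_1x|<|x|$ and $|a_ix^i|\leq|x|^2<|x|$ for $0\neq x\in\mathfrak{m}_{C_p}$, so $|f(x)|<|x|$; hence $f$ is strictly contracting and admits no nonzero periodic point, forcing $\gamma=0$ and therefore $f^{\circ k}(\alpha)=0$ for some $k$, i.e.\ $\alpha\in T(f)$.

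I expect the main obstacle to be this reverse inclusion, and within it the observation that commutativity confines the $f$-orbit of a $u$-periodic point to the finite set $\text{Fix}(u^{\circ m})$; once that is in place, the strict-contraction estimate, which is exactly where the finite-height hypothesis $f'(0)\in\mathfrak{m}_K$ enters, cleanly rules out nonzero periodic points of $f$. A secondary technical point, handled above through the $\mathbb{Z}_p$-action, is reconciling the period $m$ produced by the permutation argument with the powers of $p$ appearing in the definition of $\text{Preper}(u)$.
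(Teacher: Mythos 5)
Your proof is correct, but there is nothing in the paper to measure it against: the paper states Proposition \ref{p2.4} purely as a citation to Lubin \cite{JL} and supplies no argument of its own. What you have written is a self-contained proof that uses only ingredients already present in the paper: commutativity of the members of $\mathcal{D}$, finiteness of the zero set in $\mathfrak{m}_{C_p}$ of a nonzero series in $\mathcal{O}_K[[x]]$ (the \enquote{Zariski density} fact from the introduction), the $\mathbb{Z}_p$-action on invertible series (your reduction of the period $m=p^r n'$ to $p^r$ via $z=1/n'\in\mathbb{Z}_p^{\times}$ is exactly the remark the paper makes just before its two lemmas), and the strict-contraction fact that a noninvertible series over $\mathcal{O}_K$ has $0$ as its only periodic point in the disk --- a fact the paper itself invokes without proof inside Lemma \ref{l3.2} (\enquote{$g(x)$ being noninvertible can have $0$ as its only fixed point}). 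Both inclusions are structurally sound: the permutation argument on the finite zero set $R_n$ for $T(f)\subseteq\text{Preper}(u)$, and the confinement of the $f$-orbit of a $u$-periodic point inside the finite set $\mathrm{Fix}(u^{\circ m})$ for the reverse inclusion. This gives the paper something it currently lacks, namely an internal proof of the bridge between $T(f)$ and $\text{Preper}(u)$ on which Theorem \ref{t3.4} ultimately rests.

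One parenthetical needs repair. You justify $f^{\circ n}\not\equiv 0 \bmod \mathfrak{m}_K$ by saying its linear coefficient $f'(0)^n$ is nonzero. But $f'(0)\in\mathfrak{m}_K$, so $f'(0)^n$ reduces to $0$ modulo $\mathfrak{m}_K$; a nonvanishing linear coefficient only shows $f^{\circ n}\neq 0$ in $\mathcal{O}_K[[x]]$. That weaker statement is in fact all you need: write $f^{\circ n}=\pi^s g$ with $g\not\equiv 0\bmod\mathfrak{m}_K$ and apply Weierstrass preparation to $g$, whose zeros in the open disk are those of $f^{\circ n}$. Alternatively, if you want the stronger claim as stated, use finite height correctly: the reduction $\bar f$ has lowest-degree term $cx^{d}$ with $c\neq 0$ and $d\geq 2$, so $\bar f^{\circ n}$ has nonzero lowest-degree term of degree $d^{n}$, hence $f^{\circ n}\not\equiv 0 \bmod \mathfrak{m}_K$. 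Either one-line fix closes the issue; the remainder of your argument stands as written.
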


	\section{The Main Results} \label{s3}
	
	We	start with a Conjecture:	
	\begin{conj}\label{c3.1} \cite{LB2}
		If $f$ and $u$ are, respectively, two stable noninvertible and invertible power series in a stable $p$-adic dynamical system $\mathcal{D}$, then there exists a formal group $F$ with coefficients in $\mathcal{O}_K$, two endomorphisms $f_F$ and $u_F$ of $F$, and a nonzero power series $h$ such that $f \circ h=h \circ f_F$ and $u \circ h=h \circ u_F$. We call $h$ to be the isogeny from $f_F$ to $f$. 
	\end{conj}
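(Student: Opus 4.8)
The plan is to manufacture the formal group $F$ directly from the finite-height series $f$, to realize $f$ as an endomorphism of $F$ up to the isogeny $h$, and then to show that the commuting invertible series $u$ is \emph{forced} to be an endomorphism as well. The arithmetic input is that $f$ is stable of finite height, so $f'(0)=\pi\in\mathfrak{m}_K$ while $f\not\equiv 0\bmod\mathfrak{m}_K$; reducing modulo $\mathfrak{m}_K$ yields a series $\bar f$ over the residue field whose lowest-degree term has degree $p^{h}$, where $h$ is the height. By the Lubin--Tate classification over the residue field, $\bar f$ is the reduction of the Frobenius-type endomorphism of a height-$h$ formal group, so the candidate for $F$ is a lift to $\mathcal{O}_K$ of the formal group attached to $\bar f$, and the candidate for $f_F$ is the corresponding lift of that endomorphism.

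Granting such an $F$ and $f_F$, I would build the isogeny $h$ by successive approximation against the intertwining relation $f\circ h=h\circ f_F$, solving for the coefficients of $h$ one degree at a time; the hypotheses $v(\pi)>0$ and finite height control the Newton polygons and force the approximating polynomials to converge in the $(\mathfrak{m}_K,x)$-adic topology to a power series $h\in x\,\mathcal{O}_{\bar K}[[x]]$. This is the same mechanism by which Lubin--Tate theory produces a unique conjugating series between two Frobenius series of the same type. Proposition \ref{p2.4} keeps the point data consistent throughout, since $T(f)=\text{Preper}(u)$ shows $f$ and $u$ see the same infinite special set, which becomes the torsion $T(f)$ of $F$ under $h$.

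It then remains to transfer $u$ across $h$. Working on the torsion, $h$ identifies $T(f)$ with the $F$-torsion, and on this infinite set $u$ permutes points while commuting with $f$; the rigidity of the $\mathbb{Z}_p$-action in the two Lemmas above shows that this action is governed by the single scalar $u'(0)$, so $u$ agrees on $T(f)$ with the transport through $h$ of the endomorphism $u_F\in\text{End}_{\mathcal{O}_K}(F)$ with matching linear coefficient (when $h'(0)\ne 0$ one may instead simply set $u_F=h^{\circ(-1)}\circ u\circ h$ and check it commutes with $f_F$). Since $T(f)$ is infinite it is Zariski dense in $\mathfrak{m}_{C_p}$, so the series $u\circ h-h\circ u_F$ vanishes on a Zariski-dense set and is therefore identically zero, giving $u\circ h=h\circ u_F$. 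The genuine obstacle, and the reason this remains a conjecture in general, is the very first step: for an arbitrary stable $f$ not already in Lubin--Tate normal form one must synthesize the group law of $F$ from the iterated preimages $f^{\circ -n}(0)$ and then prove that this law and the conjugating series $h$ really have integral coefficients and converge, rather than merely existing formally over $C_p$. I expect that integrality-and-convergence step, not the essentially formal transfer of $u$, to be where the difficulty truly lies.
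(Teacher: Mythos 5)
First, a point of orientation: the statement you were asked to prove is recorded in the paper as Conjecture \ref{c3.1}, and the paper does \emph{not} prove it. The remark immediately following it only notes that Berger established the case $K=\mathbb{Q}_p$ in \cite[Theorem B]{LB2}, and the rest of the paper takes the conjecture as a standing hypothesis. So there is no proof in the paper to compare against; your attempt has to stand on its own, and it does not close the gap.

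The fatal step is your first one. You assert that, because $f$ is stable of finite height, its reduction $\bar f$ modulo $\mathfrak{m}_K$ is, ``by the Lubin--Tate classification over the residue field,'' the reduction of a Frobenius-type endomorphism of a height-$h$ formal group, so that $F$ and $f_F$ can be obtained by lifting. Lubin--Tate theory says no such thing: it applies to series satisfying the precise congruences $f(x)\equiv \pi x$ modulo degree $2$ and $f(x)\equiv x^{q}$ modulo $\mathfrak{m}_K$, and an arbitrary stable noninvertible series of finite height satisfies neither; moreover, over the residue field the series that occur as endomorphisms of a height-$h$ formal group form a very thin set (the endomorphism ring is an order in a division algebra), so a general $\bar f$ of Weierstrass degree $p^{h}$ is simply not among them. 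Producing a background formal group from the bare commutation data $(f,u)$ is exactly the content of Lubin's conjecture and of Conjecture \ref{c3.1}; taking it as the ``first step'' makes the argument circular, as you yourself concede in your closing paragraph. The later steps also rely on facts that fail in general: the isogeny $h$ need not be invertible --- in Example \ref{e4.3} of the paper $h(x)=x^{p-1}$ and $f$ has a repeated root, so $f$ is not conjugate to an endomorphism of \emph{any} formal group by an invertible series --- hence your fallback definition $u_F=h^{\circ(-1)}\circ u\circ h$ is unavailable precisely in the typical case; and the Zariski-density transfer of $u$ across $h$ presupposes that a candidate $u_F\in\text{End}_{\mathcal{O}_K}(F)$ with the matching linear coefficient already exists, which is again part of what must be proved rather than an input.
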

	\begin{rem}
		The conjecture \ref{c3.1} is proved in \cite[Theorem.~B]{LB2} for $K=\mathbb{Q}_p$. This conjecture resembles to that one given by Lubin in \cite{JL} while \cite{HCL1}, \cite{HCL2} and \cite{HCL3} proved several results in the support of Lubin's  conjecture, which says, if a noninvertible series commutes with an invertible series, there is a formal group somehow in the background. 
	\end{rem}

	For the above formal group $F$ over $\mathcal{O}_K$, its endomorphism ring $\text{End}_{\mathcal{O}_K}(F)$ is a stable $p$-adic dynamical system. We denote by $\text{Tors}(F)=\bigcup_n T(n,f_F)$, the torsion points of $F$, where $T(n,f_F)=\{\alpha \in \bar{\mathfrak{m}}_K: f_F^{\circ n}(\alpha)=0\}$. Then we have the following nice result:
	\begin{thm}\cite{LB1}\label{t3.1}
		If $F_1$ and $F_2$ are two formal groups over $\mathcal{O}_K$ and if $\text{Tors}(F_1)\cap \text{Tors}(F_2)$=infinite, then $F_1=F_2$. 
	\end{thm}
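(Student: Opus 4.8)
The plan is to translate the hypothesis on torsion points into a statement about formal logarithms and then to invoke rigidity. Recall that over $K$ each formal group $F_i$ carries a formal logarithm $\lambda_i = \log_{F_i} \in x + x^2 K[[x]]$, normalized by $\lambda_i'(0) = 1$ and characterized by $\lambda_i(F_i(x,y)) = \lambda_i(x) + \lambda_i(y)$; it converges on the open unit disk $\mathfrak{m}_{C_p}$, and $F_i$ is recovered from it through $F_i(x,y) = \lambda_i^{-1}(\lambda_i(x) + \lambda_i(y))$. Thus $F_1 = F_2$ if and only if $\lambda_1 = \lambda_2$, so the whole theorem reduces to forcing the two logarithms to coincide.

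The first step is to note that torsion points are zeros of the logarithm. If $\alpha \in \text{Tors}(F_i)$ is annihilated by $[p^n]_{F_i}$, then applying $\lambda_i$ and using additivity gives $p^n \lambda_i(\alpha) = \lambda_i([p^n]_{F_i}(\alpha)) = \lambda_i(0) = 0$, so $\lambda_i(\alpha) = 0$; the evaluation is legitimate because $|\alpha|_p < 1$ places $\alpha$ inside the disk of convergence. Hence every point of the infinite set $\mathcal{Z} := \text{Tors}(F_1) \cap \text{Tors}(F_2) \subset \mathfrak{m}_{C_p}$ is a common zero of $\lambda_1$ and $\lambda_2$, and in particular a zero of the difference $\lambda_1 - \lambda_2 \in x^2 K[[x]]$.

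The heart of the argument, and the step I expect to be the main obstacle, is the rigidity that upgrades infinitely many common zeros to the equality $\lambda_1 = \lambda_2$. The difficulty is that the torsion points of a finite-height formal group accumulate on the boundary circle $|x|_p = 1$ rather than at the interior point $0$ (their valuations tend to $0$ along the Newton polygon of $[p]_{F_i}$), so the naive $p$-adic identity theorem does not apply to $\lambda_1 - \lambda_2$, which in any case is not integral. I would resolve this through the integral rigidity principle of tool (ii): a nonzero element of $\mathcal{O}_K[[x]]$ has, by Weierstrass preparation, only finitely many zeros in $\mathfrak{m}_{C_p}$, since its unit factor is zero-free and its distinguished polynomial factor has bounded degree. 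Applying this to the integral iterates and using the product description $\lambda_i = \lim_n p^{-n}\,[p^n]_{F_i}(x)$, in which each $[p^n]_{F_i}$ is, up to a unit, the distinguished polynomial whose roots are exactly $F_i[p^n]$, one compares the zero divisors of $\lambda_1$ and $\lambda_2$ level by level: at each radius $p^{-s}$ Strassman's theorem bounds the number of zeros, so an \emph{infinite} common zero set cannot be apportioned among the finite level sets $F_i[p^n]$ unless the two towers of torsion coincide. Since a normalized logarithm is determined by its zero locus, this forces $\lambda_1 = \lambda_2$ and therefore $F_1 = F_2$.
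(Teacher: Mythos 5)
The paper never proves this statement: Theorem~\ref{t3.1} is imported from Berger \cite{LB1}, where it is the main theorem of an entire article, and it is used downstream (in Proposition~\ref{p3.4} and Theorem~\ref{t3.4}) as a black box. So your attempt has to be judged against Berger's argument, not against anything written here. Your opening steps are correct and are indeed the natural point of departure: $F_i$ is recovered from its normalized logarithm $\lambda_i$, every torsion point lies in $\mathfrak{m}_{C_p}$ and is a zero of $\lambda_i$, so $\mathcal{Z}=\text{Tors}(F_1)\cap\text{Tors}(F_2)$ is an infinite common zero set of $\lambda_1$ and $\lambda_2$; and you are right that the naive identity theorem cannot be applied to $\lambda_1-\lambda_2$, since that series is not integral and a nonzero rigid function on the open unit disk can have infinitely many zeros (each $\lambda_i$ does).

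The gap is your final paragraph, which asserts, rather than proves, the two claims that are the entire content of the theorem. First, the claim that an infinite common zero set ``cannot be apportioned among the finite level sets $F_i[p^n]$ unless the two towers of torsion coincide'' is not a counting statement. Infinitude of $\mathcal{Z}$ only says that $\mathcal{Z}$ contains points of arbitrarily large exact order for both groups; Strassman/Weierstrass bounds on zeros in sub-disks provide no mechanism for producing \emph{new} common torsion points from old ones --- for instance, nothing you wrote shows that if $z\in\mathcal{Z}$ then $[p]_{F_1}(z)$, or more generally $[a]_{F_1}(z)$, is still torsion for $F_2$ --- and without such a propagation step there is no route from ``$\mathcal{Z}$ infinite'' to ``$\text{Tors}(F_1)=\text{Tors}(F_2)$''. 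That implication is the heart of Berger's theorem and requires genuinely different input. Second, even granting equality of the torsion sets, your closing principle that ``a normalized logarithm is determined by its zero locus'' is false in the generality in which you invoke it: $1+x$ is a unit of the ring of rigid functions on $\mathfrak{m}_{C_p}$ (its only zero has absolute value $1$), so $(1+x)\lambda_1$ has exactly the same divisor as $\lambda_1$ and the same normalization $\bigl((1+x)\lambda_1\bigr)'(0)=1$, yet is a different function. A divisor determines a rigid function on the open disk only up to a unit, and excluding a nonconstant unit requires using the integrality of the group law (equivalently, the functional equation $\lambda_i([p]_{F_i}(x))=p\,\lambda_i(x)$ with $[p]_{F_i}$ integral); that is again a substantive argument, not a formality. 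In short, your proposal is a correct reduction to two statements, one of which is the theorem's core difficulty and the other of which is false as stated.
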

	\begin{defn} 
		Let $f(x)$ and $g(x)$ be two noninvertible stable power series over $\mathcal{O}_K$ without constant term. We call a power series $h(x) \in \mathcal{O}_K[[x]]$ an $\mathcal{O}_K$-isogeny of $f(x)$ into $g(x)$ if $h \circ f=g \circ h$. If $u(x)$ be any invertible series in $\mathcal{O}_K[[x]]$ then $u \circ h$ is also an $\mathcal{O}_K$-isogeny of $f$.
	\end{defn}
	
	Next we prove the following lemma:
	\begin{lem} \label{l3.2} 
		Let $f(x)$ and $g(x)$ be noninvertible stable power series over $\mathcal{O}_K$ with finite Weierstrass degree. Let $h$ be an isogeny of $f$ into $g$, then $h$ maps $T(f)$ into $T(g)$. Moreover, $h: T(f) \to T(g)$ is surjective.
	\end{lem}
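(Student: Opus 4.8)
The plan is to treat the two assertions separately, disposing of the inclusion $h(T(f)) \subseteq T(g)$ quickly and reserving the real work for surjectivity. First I would record two structural facts about $h$. Evaluating $h \circ f = g \circ h$ at $x = 0$ gives $h(0) = g(h(0))$, so $h(0)$ is a fixed point of $g$ in $\bar{\mathfrak{m}}_K$; the valuation computation below forces $h(0) = 0$, so $h \in x\,\mathcal{O}_K[[x]]$ and $h$ has a well-defined finite Weierstrass degree $d_h \geq 1$. Next, an immediate induction on $n$ upgrades the defining relation to $h \circ f^{\circ n} = g^{\circ n} \circ h$ for all $n \geq 1$. The inclusion then follows: if $\alpha \in T(f)$ with $f^{\circ n}(\alpha) = 0$, then $g^{\circ n}(h(\alpha)) = h(f^{\circ n}(\alpha)) = h(0) = 0$, and since $h \in x\,\mathcal{O}_K[[x]]$ carries $\bar{\mathfrak{m}}_K$ into itself, $h(\alpha) \in T(g)$.

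The key input for surjectivity is that $f$ (and likewise $g$) strictly increases the valuation on the punctured open disk. Writing $f = \sum_{i \geq 1} a_i x^i$ with $a_1 = f'(0) \in \mathfrak{m}_K$, for any $\gamma \in \bar{\mathfrak{m}}_K \setminus \{0\}$ every term $a_i \gamma^i$ has valuation strictly larger than $v(\gamma)$ — the $i = 1$ term because $v(a_1) > 0$, the terms $i \geq 2$ because $v(\gamma) > 0$ — so $v(f(\gamma)) > v(\gamma)$. Consequently $0$ is the only periodic point of $f$ in $\bar{\mathfrak{m}}_K$, since a nonzero period-$k$ point would satisfy $v(\gamma) = v(f^{\circ k}(\gamma)) > v(\gamma)$. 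Applied to $g$, the same computation pins down $h(0) = 0$ above.

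Using this I would show that the finite zero set $Z(h) = \{\gamma \in \bar{\mathfrak{m}}_K : h(\gamma) = 0\}$ is contained in $T(f)$. Finiteness of $Z(h)$ is Weierstrass preparation applied to $h$, and if $\gamma \in Z(h)$ then $h(f(\gamma)) = g(h(\gamma)) = g(0) = 0$, so $f$ maps the finite set $Z(h)$ into itself. Every element of $Z(h)$ is thus preperiodic under $f$, and since the only periodic point in $\bar{\mathfrak{m}}_K$ is $0$, each is eventually sent to $0$; that is, $Z(h) \subseteq T(f)$. For surjectivity, let $\beta \in T(g)$ with $g^{\circ m}(\beta) = 0$. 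Since $\beta \in \bar{\mathfrak{m}}_K$, the reduction of $h(x) - \beta$ modulo the maximal ideal agrees with that of $h$, so $h(x) - \beta$ again has Weierstrass degree $d_h \geq 1$ and Weierstrass preparation yields some $\alpha \in \bar{\mathfrak{m}}_K$ with $h(\alpha) = \beta$. Then $h(f^{\circ m}(\alpha)) = g^{\circ m}(\beta) = 0$, so $f^{\circ m}(\alpha) \in Z(h) \subseteq T(f)$, whence $f^{\circ(m+k)}(\alpha) = 0$ for suitable $k$ and $\alpha \in T(f)$; this exhibits $\beta$ in the image.

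I expect surjectivity to be the main obstacle, and within it the crucial, least routine point is the passage from a preimage in the disk to a preimage in $T(f)$: this is exactly what valuation monotonicity buys us, by forcing every $f$-preperiodic point of the finite set $Z(h)$ to terminate at $0$. One should also keep the Weierstrass bookkeeping honest — that $h - \beta$ retains degree $d_h$ for $\beta$ of positive valuation, and that $d_f = d_g$, which follows from multiplicativity of Weierstrass degree under composition applied to $h \circ f = g \circ h$ — but these become routine once the valuation lemma is in hand.
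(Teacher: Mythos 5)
Your overall route is the same as the paper's: $h(0)=0$ because $h(0)$ is a fixed point of the noninvertible series $g$; the inclusion $h(T(f))\subseteq T(g)$ from the intertwined relation $h\circ f^{\circ n}=g^{\circ n}\circ h$; and surjectivity by pulling $\beta\in T(g)$ back to some $\alpha$ with $h(\alpha)=\beta$, then using finiteness of the zero set of $h$ together with the fact that $0$ is the only periodic point of a noninvertible stable series to force $f^{\circ n}(\alpha)=0$ (your detour through $Z(h)\subseteq T(f)$ is just a reorganization of the paper's orbit argument). On one point you are more careful than the paper: the paper simply writes ``let $\alpha\in\bar{\mathfrak{m}}_K$ such that $h(\alpha)=\beta$'' with no justification, whereas you try to produce $\alpha$ by Weierstrass preparation.

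Precisely there, however, your argument has a genuine gap: you assert that $h(0)=0$, i.e. $h\in x\cdot\mathcal{O}_K[[x]]$, gives $h$ a finite Weierstrass degree $d_h\geq 1$. That is a non sequitur: $h(x)=px$ lies in $x\cdot\mathbb{Z}_p[[x]]$ and has infinite Weierstrass degree, and such a series is not surjective onto the open disk (any $\beta$ with $0<v(\beta)<v(p)$ has no preimage of positive valuation). Since the valuations of points of $T(g)$ tend to $0$, this is exactly the situation in which your preimage-producing step, and with it surjectivity, would collapse. The fact you need is true, but it must be extracted from the isogeny relation itself: if $h\equiv 0\pmod{\mathfrak{m}_K}$, write $h=\pi^{\mu}h_1$ with $\mu\geq 1$ maximal, so $h_1\not\equiv 0\pmod{\mathfrak{m}_K}$ and $h_1(0)=0$. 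Substituting into $h\circ f=g\circ h$ with $g=\sum_j b_j x^j$ and dividing by $\pi^{\mu}$ gives $h_1\circ f=b_1h_1+\sum_{j\geq 2}b_j\pi^{(j-1)\mu}h_1^{j}$, and every term on the right lies in $\mathfrak{m}_K[[x]]$ because $b_1=g'(0)\in\mathfrak{m}_K$ ($g$ is noninvertible). Hence $\overline{h_1}\circ\overline{f}=0$ over the residue field; but $\overline{f}\neq 0$ (this is where the hypothesis that $f$ has finite Weierstrass degree enters) and $\overline{h_1}\neq 0$, both with zero constant term, so writing $\overline{h_1}=cx^a+\cdots$ and $\overline{f}=c'x^b+\cdots$ the composite has nonzero lowest-degree term $c(c')^a x^{ab}$ --- a contradiction. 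Therefore $\mu=0$ and $d_h$ is finite. With this inserted, your proof is complete, and it in fact repairs the omission in the paper's own argument.
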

	\begin{proof}
		At first we will show that $h(0)=0$. Since $g(h(0))=h(f(0))=h(0)$, $h(0)$ is a fixed point of $g(x)$. But $g(x)$ being noninvertible can have $0$ as its only fixed point and hence $h(0)=0$.

		Now let $\alpha \in T_n(f) \subset T(f)$, then $g^{\circ n}(h(\alpha))=h(f^{\circ n}(\alpha))=h(0)=0$. This implies $h(\alpha) \in T(g)$. This shows that $h$ maps $T(f)$ to $T(g)$.
		
		On the other hand, take any $\beta \in T_m(g) \subset T(g)$ for some natural number $m \in \mathbb{N}$ and let $\alpha \in \bar{\mathfrak{m}}_K$ such that $h(\alpha)=\beta$. We need to show that $\alpha \in T(f)$. For, $h(f^{\circ m}(\alpha))=g^{\circ m}(h(\alpha))=0$ implies $f^{\circ m}(\alpha)$ is a root of $h(x)$ which is also true for all $n \geq m$. Since $h(x)$ can have only finitely many roots in $\bar{\mathfrak{m}}_K$, we must have $$f^{\circ n+\tilde{n}}(\alpha)=f^{\circ n}(\alpha)
		\ \text{for some} \ n,~\tilde{n} \in \mathbb{N}.$$
		This implies that $f^{\circ n}(\alpha)$ is a fixed point of $f^{\circ \tilde{n}}(x)$. Since $f^{\circ \tilde{n}}(x)$ is noninvertible, it has the only fixed point $0$ and hence $f^{\circ n}(\alpha)=0$. Thus $\alpha \in T_n(f) \subset T(f)$. Thus $h$ is surjective.
	\end{proof}
	\begin{defn}
		We denote a stable $p$-adic dynamical system $\mathcal{D}$ by the package $(\mathcal{D},f,u;F,f_F,u_F;h)$, where $F$ is the background formal group of $\mathcal{D}$  with  $f_F,u_F$ noninvertible and invertible endomorphisms respectively, while $u, f$ are invertible and noninvertible power series in $\mathcal{D}$ respectively, along with an isogeny map $h:f_F \to f$  as in Conjecture \ref{c3.1}
	\end{defn}
	
	Now we will prove the uniqueness of the formal group $F$ in Conjecture \ref{c3.1}:
	\begin{prop} \label{p3.4}
		There exists a unique formal group $F$ for each stable $p$-adic dynamical system $\mathcal{D}$ in the Conjecture \ref{c3.1}.
	\end{prop}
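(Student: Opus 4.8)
The plan is to derive the uniqueness of the background formal group from Berger's rigidity statement (Theorem \ref{t3.1}): if two formal groups can both serve as background of a single $\mathcal{D}$, I will argue that their torsion sets must meet in infinitely many points, whence they coincide. So suppose $\mathcal{D}$ carries two packages $(\mathcal{D},f,u;F_1,f_{F_1},u_{F_1};h_1)$ and $(\mathcal{D},f,u;F_2,f_{F_2},u_{F_2};h_2)$. I may use a single noninvertible $f$ and a single invertible $u$ for both, because by Proposition \ref{p2.4} the set $\text{Preper}(\mathcal{D})=T(f)=\text{Preper}(u)$ is independent of the chosen stable series; thus the two packages share the same target $\text{Preper}(\mathcal{D})$, and it suffices, by Theorem \ref{t3.1}, to show that $\text{Tors}(F_1)\cap\text{Tors}(F_2)$ is infinite.

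First I would rewrite the defining relation $f\circ h_i=h_i\circ f_{F_i}$ of Conjecture \ref{c3.1} as $h_i\circ f_{F_i}=f\circ h_i$, which exhibits each $h_i$ as an $\mathcal{O}_K$-isogeny of $f_{F_i}$ into $f$. Comparing Weierstrass degrees across this relation (the Weierstrass degree is multiplicative under composition) gives $\deg f_{F_i}=\deg f$, so $f_{F_i}$ is again noninvertible stable of finite Weierstrass degree and Lemma \ref{l3.2} applies: $h_i$ carries $\text{Tors}(F_i)=T(f_{F_i})$ onto $T(f)$. Combining this with Proposition \ref{p2.4}, each $h_i$ yields a surjection $\text{Tors}(F_i)\to\text{Preper}(\mathcal{D})$, and since the torsion of a finite-height series is infinite, both $\text{Tors}(F_1)$ and $\text{Tors}(F_2)$ are infinite sets surjecting onto one and the same infinite subset $\text{Preper}(\mathcal{D})\subset\bar{\mathfrak{m}}_K$.

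The crucial step is to upgrade this common image to a common torsion. To that end I would exploit the two conjugations simultaneously: when $h_1$ has nonzero linear term its formal inverse $h_1^{\circ(-1)}$ exists, and $\phi:=h_1^{\circ(-1)}\circ h_2$ satisfies $\phi\circ f_{F_2}=f_{F_1}\circ\phi$ and $\phi\circ u_{F_2}=u_{F_1}\circ\phi$, so $\phi$ intertwines the full commuting families of endomorphisms and realizes $F_1$ and $F_2$ as isogenous. The natural candidate for an infinite set of coincident torsion points is $\text{Preper}(\mathcal{D})$ itself: it equals $\text{Tors}(F_i)$ exactly when $f,u$ are genuine endomorphisms of $F_i$ (the isogeny $h_i$ being a coordinate change of the disk), in which case $\text{Tors}(F_1)=\text{Preper}(\mathcal{D})=\text{Tors}(F_2)$ and Theorem \ref{t3.1} applies at once.

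The main obstacle is precisely this upgrade, and it is genuine rather than cosmetic. An isogeny, or even an isomorphism $F_1\cong F_2$, only transports $\text{Tors}(F_2)$ bijectively onto $\text{Tors}(F_1)$; since such a map permutes torsion points, isogeny by itself does not produce any set-theoretically coincident torsion points in $\bar{\mathfrak{m}}_K$, which is what Theorem \ref{t3.1} requires. To close the gap I would argue that the relevant conjugating and group-law series agree on the infinite set $\text{Preper}(\mathcal{D})$ and then invoke the rigidity principle recalled in the introduction, namely that a power series in $\mathcal{O}_K[[x]]$ vanishing on an infinite (hence Zariski-dense) subset of $\mathfrak{m}_{C_p}$ vanishes identically; applied to the difference of these series this promotes agreement on $\text{Preper}(\mathcal{D})$ to an equality of series, forcing $\text{Tors}(F_1)=\text{Tors}(F_2)$ and hence, by Theorem \ref{t3.1}, $F_1=F_2$. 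The delicate part is making precise the normalization of the isogenies $h_i$ (equivalently, handling the case of vanishing linear term) so that this density argument can be brought to bear; this is where the proof must work hardest.
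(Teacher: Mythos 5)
Your overall route is the same as the paper's: invoke Conjecture \ref{c3.1} to get two packages over the same $f$, use Lemma \ref{l3.2} to obtain surjections $h_i:\text{Tors}(F_i)\to T(f)=\text{Preper}(\mathcal{D})$, and then try to conclude via Theorem \ref{t3.1}. The difference is that you stop and flag the step this does not yet give: two infinite sets can each surject onto the same infinite set while being disjoint, so ``common image'' does not yield ``infinitely many common torsion points.'' You are right that this is a genuine obstacle, and your proposal does not overcome it. Your gap-closing sketch has two unresolved problems. First, $\phi:=h_1^{\circ(-1)}\circ h_2$ exists only when $h_1$ is invertible, but the isogenies produced by Conjecture \ref{c3.1} typically are not: in the paper's own Example \ref{e4.3} the isogeny is $h(x)=x^{p-1}$, which has vanishing linear term. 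Second, even granting invertibility, you never specify which two series are supposed to agree on $\text{Preper}(\mathcal{D})$; as you yourself observe, an isomorphism $F_1\cong F_2$ merely transports $\text{Tors}(F_2)$ bijectively onto $\text{Tors}(F_1)$, and rigidity applied to a difference of conjugating series yields an identity of power series, not the set-theoretic coincidence of infinitely many torsion points inside $\bar{\mathfrak{m}}_K$ that Theorem \ref{t3.1} requires. So what you have is an honest reduction to the crux, not a proof.

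For comparison, you should know that the paper's own proof makes exactly the inference you declined to make. It takes the surjections $h:T(f_F)\to T(f)$ and $h':T(f_G)\to T(f)$, notes that every $\alpha\in\text{Preper}(\mathcal{D})$ has preimages $\beta_1\in\text{Tors}(F)$ and $\beta_2\in\text{Tors}(G)$ with $h(\beta_1)=\alpha=h'(\beta_2)$, and then asserts that $\text{Tors}(F)$ and $\text{Tors}(G)$ have infinitely many points in common. Since $\beta_1$ and $\beta_2$ are preimages under \emph{different} maps, nothing forces $\beta_1=\beta_2$, so the asserted intersection is unjustified --- precisely the gap you isolated. In short: your proposal fails to prove the statement, but in locating the failure point it identifies a real lacuna in the paper's argument; closing it would require additional input relating the two isogenies (or working directly with the formal groups), which neither your sketch nor the paper supplies.
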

	\begin{proof}
		Let $\mathcal{D}$ be a stable $p$-adic dynamical system over $\mathcal{O}_K$ consisting of a noninvertible series $f$ and an invertible series $u$. 
		By Conjecture \ref{c3.1}, there exists a formal group $F$ over $\mathcal{O}_K$ with endomorphisms $f_F$, $u_F$ and an isogeny $h$ from $f_F$ to $f$.
		We want to show that that $F$ is unique. If possible let there exists another formal group $G$ over $\mathcal{O}_K$ with endomorphisms $f_G$, $u_G$ and an isogeny, say, $h'$ from $f_G$ to $f$.
		By Lemma \ref{l3.2}, we have the surjections $h:T(f_F) \to T(f)$ and $h':T(f_G) \to T(f)$. Therefore for every $\alpha \in \text{Preper}(\mathcal{D})$ there exists some $\beta_1 \in \text{Tors}(F)$ and some $\beta_2 \in \text{Tors}(G)$ such that $h(\beta_1)=\alpha=h'(\beta_2)$. This shows that both $\text{Tors}(F)$ and $\text{Tors}(G)$ has infinitely many points in common and thus by the Theorem \ref{t3.1}, we get $F=G$.
	\end{proof}

	We will now proof the main result of the paper:
	\begin{thm}\label{t3.4}
		If $(\mathcal{D}_1,f_1,u_1;F_1,f_{F_1},u_{F_1};h_1)$ and $(\mathcal{D}_2,f_2,u_2;F_2,f_{F_2},u_{F_2};h_2)$ are two dynamical systems over $\mathcal{O}_K$ such that $ \text{Preper}(\mathcal{D}_1) \cap \text{Preper}(\mathcal{D}_2)$ is infinite , then $\mathcal{D}_1=\mathcal{D}_2$.
	\end{thm}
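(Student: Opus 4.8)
The plan is to convert the hypothesis on preperiodic points into a statement about torsion points of the two background formal groups, apply Berger's rigidity theorem (Theorem \ref{t3.1}) to identify those formal groups, and then recover the equality of the dynamical systems from the maximality built into the definition of $\mathcal{D}$. To begin I would fix noninvertible stable series $f_1 \in \mathcal{D}_1$ and $f_2 \in \mathcal{D}_2$. By Proposition \ref{p2.4} the preperiodic sets are intrinsic, $\text{Preper}(\mathcal{D}_i) = T(f_i)$, so the hypothesis reads: $T(f_1) \cap T(f_2)$ is infinite. The packaged isogenies satisfy $f_i \circ h_i = h_i \circ f_{F_i}$, so Lemma \ref{l3.2} makes each $h_i$ a surjection $\text{Tors}(F_i) = T(f_{F_i}) \twoheadrightarrow T(f_i) = \text{Preper}(\mathcal{D}_i)$ with finite fibres.

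Next I would push the infinite intersection up to the formal groups. Writing $S = \text{Preper}(\mathcal{D}_1) \cap \text{Preper}(\mathcal{D}_2)$, surjectivity of $h_1,h_2$ lets me choose, for each $\alpha \in S$, preimages $\beta_1 \in \text{Tors}(F_1)$ and $\beta_2 \in \text{Tors}(F_2)$ with $h_1(\beta_1) = \alpha = h_2(\beta_2)$. Once $\text{Tors}(F_1) \cap \text{Tors}(F_2)$ is seen to be infinite, Theorem \ref{t3.1} yields $F_1 = F_2 =: F$.

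With a common background formal group I would then produce a shared member of the two systems. Proposition \ref{p3.4} guarantees that the formal group attached to each $\mathcal{D}_i$ is unique, and $\text{Tors}(F)$ maps onto both $\text{Preper}(\mathcal{D}_1)$ and $\text{Preper}(\mathcal{D}_2)$; matching the two isogenies on the set $S$—which is Zariski dense in $\mathfrak{m}_{C_p}$ by the rigidity principle, since it is infinite—would yield a stable series lying in $\mathcal{D}_1 \cap \mathcal{D}_2$. Because each $\mathcal{D}_i$ is maximal, any stable series commuting with one of its members already belongs to it, so a single shared stable series forces $\mathcal{D}_2 \subseteq \mathcal{D}_1$ and $\mathcal{D}_1 \subseteq \mathcal{D}_2$, whence $\mathcal{D}_1 = \mathcal{D}_2$.

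The main obstacle is the lifting in the second step: the isogenies $h_1, h_2$ are in general non-injective with different fibres, so the chosen preimages $\beta_1, \beta_2$ of a common $\alpha$ need not be equal as elements of $\bar{\mathfrak{m}}_K$, and having infinitely many preimages on each side does not by itself furnish infinitely many common torsion points. Closing this gap is what genuinely requires the uniqueness of Proposition \ref{p3.4} together with the rigidity principle: because $S$ is infinite, hence Zariski dense, reconciling the two descriptions of the Galois action on $S$ pins down a single power series and thereby identifies the torsion lattices of $F_1$ and $F_2$. I expect this reconciliation of the two isogenies on the dense set $S$ to be the crux of the whole argument.
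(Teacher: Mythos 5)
Your proposal takes the same route as the paper's own proof of Theorem \ref{t3.4}: lift the infinite set $S=\text{Preper}(\mathcal{D}_1)\cap\text{Preper}(\mathcal{D}_2)$ to the torsion of the background formal groups via the surjections of Lemma \ref{l3.2}, apply Theorem \ref{t3.1} to conclude $F_1=F_2$, then descend to $\mathcal{D}_1=\mathcal{D}_2$. However, the obstacle you name in your final paragraph is a genuine gap, and your proposal never closes it. Surjectivity of $h_1$ and $h_2$ gives, for each $\alpha\in S$, points $\beta_1\in\text{Tors}(F_1)$ and $\beta_2\in\text{Tors}(F_2)$ with $h_1(\beta_1)=\alpha=h_2(\beta_2)$, but $\beta_1$ and $\beta_2$ live in two different sets and there is no relation between them; no element of $\text{Tors}(F_1)\cap\text{Tors}(F_2)$ has been exhibited, so Theorem \ref{t3.1} cannot be invoked. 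Your suggested repair --- ``reconciling the two descriptions of the Galois action on $S$'' via Proposition \ref{p3.4} and Zariski density --- is not an argument as it stands: rigidity applies to a specific power series vanishing on an infinite subset of $\mathfrak{m}_{C_p}$, and you never exhibit such a series. There is also a second gap at the end: Proposition \ref{p3.4} says each $\mathcal{D}$ determines a unique background group, i.e.\ that $\mathcal{D}\mapsto F$ is well defined; it does not say this assignment is injective, so $F_1=F_2$ alone does not give $\mathcal{D}_1=\mathcal{D}_2$. Your maximality argument is the right way to finish, but it needs a stable series lying in $\mathcal{D}_1\cap\mathcal{D}_2$, and producing one is exactly what was left open.

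You should know that the step you flagged is asserted, not proved, in the paper as well: its proof of Theorem \ref{t3.4} states that $\text{Tors}(F_1)\cap\text{Tors}(F_2)$ is infinite ``because the isogenies $h_i$ map $T(f_{F_i})$ into $T(f_i)$,'' which is precisely the non sequitur you identified, so you located the weak point of the paper's first argument. The argument that actually works is the paper's alternative proof in Section \ref{s4}, and it is the rigorous form of the reconciliation you gesture at, carried out with the Galois action rather than with the isogenies: Theorem \ref{t4.2} produces $\sigma\in\text{Gal}(\bar K/K)$ and a stable $w_1\in\mathcal{D}_1$ with $\sigma(z)=w_1(z)$ for all $z\in\text{Preper}(\mathcal{D}_1)$; since $S$ is stable under $\text{Gal}(\bar K/K)$, $w_1$ maps $S$ into $S\subseteq\text{Preper}(\mathcal{D}_2)$, and Lemma \ref{l4.3} (whose proof is where rigidity enters, applied to $w_1\circ I-I\circ w_1$ vanishing on an infinite set) then gives $w_1\in\mathcal{D}_2$; a shared stable series plus maximality yields $\mathcal{D}_1=\mathcal{D}_2$. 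If you want to complete your write-up, replace the lifting step by this Galois-theoretic argument; the formal-group route through Theorem \ref{t3.1} requires an idea that neither your proposal nor the paper's first proof supplies.
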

	\begin{proof}
		By Lemma \ref{l3.2}, the isogenies $h_i$ defines surjective maps $h_i:T(f_{F_i}) \to T(f_i), \ i=1,2$.
		Thus for any $\beta_i \in T(f_i)$ there exists an $\alpha_i \in T(f_{F_i})$ such that $h_i(\alpha_i)=\beta_i$. We note that  $\text{Tors}(F_1) \cap \text{Tors}(F_2)$ will have infinitely many points in common if $\text{Preper}(\mathcal{D}_1) \cap \text{Preper}(\mathcal{D}_2)$=infinite, because the isogenies $h_i$ maps $T(f_{F_i})$ into $T(f_i)$ by Lemma \ref{l3.2}. But given that $\text{Preper}(\mathcal{D}_1) \cap \text{Preper}(\mathcal{D}_2)$ is infinite, and hence $\text{Tors}(F_1) \cap \text{Tors}(F_2)$ is infinite. Therefore by Theorem \ref{t3.1}, we conclude $F_1=F_2.$
		Hence by the uniqueness property of Proposition \ref{p3.4}, we must have $\mathcal{D}_1=\mathcal{D}_2$.
	\end{proof}
	\section{Alternative proof of Theorem~\ref{t3.4}} \label{s4}
	In this section we give another proof of the main Theorem \ref{t3.4} which deserved to be included because of its beauty. We are indebted to the ideas of \cite{LB1}. At first, we note the following beautiful result:
	\begin{thm} \cite{LB1}\label{t4.1}
		Given a formal group $F$ over $\mathcal{O}_K$ with torsion points $\text{Tors}(F)$, there is a stable endomorphism $u_F$ of $F$ and $\sigma \in \text{Gal}(\bar{K}/K)$ such that 
		$$\sigma(z)=u_F(z) \ \text{for all} \ z \in \text{Tors}(F).$$ 
	\end{thm}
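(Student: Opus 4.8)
The plan is to exploit the fact that the torsion module $\text{Tors}(F)$ carries two commuting actions, namely the action of the endomorphism ring $\text{End}_{\mathcal{O}_K}(F)$ and the action of $\text{Gal}(\bar{K}/K)$, and then to show that some Galois element acts through the endomorphism ring in a way that produces a stable endomorphism. First I would record the module structure: since $F$ is defined over $\mathcal{O}_K$, every endomorphism of $F$ is defined over $\mathcal{O}_K$, so for $\phi \in \text{End}_{\mathcal{O}_K}(F)$, $\sigma \in \text{Gal}(\bar{K}/K)$ and $z \in \text{Tors}(F)$ one has $\sigma(\phi(z)) = \phi(\sigma(z))$. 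Passing to the Tate module $T_pF$, the inverse limit of the torsion levels $T(n,f_F)$ under the transition maps, this says that the Galois representation $\rho : \text{Gal}(\bar{K}/K) \to \text{Aut}(T_pF)$ lands in the centralizer of $\text{End}_{\mathcal{O}_K}(F)$.

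Next I would use that $F$ is the background formal group of a finite-height dynamical system, and as such is of Lubin--Tate (complex-multiplication) type: the map $a \mapsto [a]$ makes $T_pF$ a free $\mathcal{O}_K$-module of rank one, where $[a]$ denotes the endomorphism of $F$ attached to $a \in \mathcal{O}_K$. Because $T_pF$ is free of rank one over $\mathcal{O}_K$, its $\mathcal{O}_K$-linear automorphism group is $\text{Aut}_{\mathcal{O}_K}(T_pF) \cong \mathcal{O}_K^{\times}$, and by the commuting-actions step $\rho$ factors through a continuous character $\chi : \text{Gal}(\bar{K}/K) \to \mathcal{O}_K^{\times}$. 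Concretely this yields $\sigma(z) = [\chi(\sigma)](z)$ for every $z \in \text{Tors}(F)$. This is exactly the Lubin--Tate reciprocity law: by local class field theory $\chi$ is, up to the Artin map, surjective onto an open subgroup of $\mathcal{O}_K^{\times}$, and in particular its image contains $1 + \mathfrak{m}_K$.

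Finally I would choose $\sigma$ so that $u := \chi(\sigma)$ is a unit of infinite multiplicative order; such a $u$ exists because $1 + \mathfrak{m}_K$ lies in the image of $\chi$ and contains elements that are not roots of unity (the roots of unity in $1 + \mathfrak{m}_K$ being only finitely many $p$-power roots). Setting $u_F = [u]$ gives an endomorphism with $u_F'(0) = u \in \mathcal{O}_K^{\times}$ that is neither zero nor a root of unity, hence stable, and by construction $\sigma(z) = u_F(z)$ for all $z \in \text{Tors}(F)$, as required. The main obstacle is the second step, namely the identification of the image of the Galois representation with the units of the endomorphism ring: this rests on the complex-multiplication structure of $F$ together with the Lubin--Tate description of the Galois action on torsion via local reciprocity, and checking that $T_pF$ is free of rank one over $\mathcal{O}_K$ with $\chi$ equal to the reciprocity character is where the real content of \cite{LB1} lies.
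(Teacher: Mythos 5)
The paper itself offers no proof of Theorem~\ref{t4.1}; it is quoted from Berger \cite{LB1}, so your proposal must be measured against Berger's argument. Your first step (Galois commutes with $\mathcal{O}_K$-endomorphisms on torsion, so the Galois image lies in the centralizer of $\text{End}_{\mathcal{O}_K}(F)$ acting on $T_pF$) is correct, but the proof breaks at the second step: you \emph{assume} that $F$ has complex multiplication by $\mathcal{O}_K$, i.e.\ that $a\mapsto[a]$ is defined for all $a\in\mathcal{O}_K$ and makes $T_pF$ free of rank one over $\mathcal{O}_K$. That is precisely the Lubin--Tate situation, and it is not available here: the theorem concerns an arbitrary formal group of finite height over $\mathcal{O}_K$, and a generic such group of height $h\geq 2$ has $\text{End}_{\mathcal{O}_K}(F)=\mathbb{Z}_p$. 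Even the ``background'' formal groups of stable dynamical systems only come equipped with the two endomorphisms $f_F,u_F$, which may generate nothing more than $\mathbb{Z}_p$ (for instance $f_F=[p]$ and $u_F=[1+p]$ on a non-CM formal group of height $2$). In that case your centralizer observation is vacuous --- the centralizer of $\mathbb{Z}_p$ in $\text{Aut}(T_pF)$ is all of $\text{GL}_h(\mathbb{Z}_p)$ --- so there is no character $\chi$, no appeal to local class field theory, and no supply of elements of $1+\mathfrak{m}_K$ in the image. Your argument therefore proves only the CM/Lubin--Tate special case, where the statement is indeed classical reciprocity.

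The missing idea is, in a precise sense, the \emph{opposite} inclusion to the one you exploit. You use that the Galois image lies in the centralizer of the endomorphism ring (true but weak); what Berger's proof requires is that $\text{End}_{\mathcal{O}_K}(F)\otimes\mathbb{Z}_p$ is \emph{exactly} the centralizer of the Galois image, which is Tate's full-faithfulness theorem for $p$-divisible groups. Granting that, producing $\sigma$ acting as an endomorphism amounts to producing a nontrivial element of the \emph{center} of the Galois image, and showing this center is infinite (hence contains an element whose derivative at $0$ is not a root of unity, a $p$-adic field having only finitely many roots of unity) is where the real work lies. Berger obtains it from Sen's theorem: the Sen operator lies in $C_p\otimes\mathfrak{g}$, where $\mathfrak{g}$ is the Lie algebra of the image of inertia, and for a one-dimensional formal group of finite height the Sen operator has nonzero trace, which forces $\mathfrak{g}$ to meet the endomorphism algebra nontrivially. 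None of this is recoverable from the rank-one/reciprocity picture, so the closing sentence of your proposal misidentifies where the content of \cite{LB1} lies: the theorem's difficulty is exactly the case in which $T_pF$ is \emph{not} free of rank one over $\mathcal{O}_K$.
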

	Now we prove a similar result for a stable $p$-adic dynamical system:
	\begin{thm}\label{t4.2}
		Let $\mathcal{D}$ be a stable $p$-adic dynamical system, then there exists a stable power series $w(x) \in \mathcal{D}$ and an $\sigma \in \text{Gal}(\bar{K}/K)$ such that \begin{align}
			\label{e0} \sigma(z)=w(z), \ \text{for all} \ z \in \text{Preper}(\mathcal{D}).
		\end{align} 
	\end{thm}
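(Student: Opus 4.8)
The plan is to transport the Galois realization on the torsion of the background formal group along the isogeny $h$. First I would invoke the package $(\mathcal{D},f,u;F,f_F,u_F;h)$ supplied by Conjecture \ref{c3.1}, so that $F$ is the background formal group of $\mathcal{D}$ with $u\circ h=h\circ u_F$ and $f\circ h=h\circ f_F$. Applying Theorem \ref{t4.1} to $F$ produces a stable endomorphism $v_F$ of $F$ and a $\sigma\in\mathrm{Gal}(\bar K/K)$ with $\sigma(\zeta)=v_F(\zeta)$ for every $\zeta\in\mathrm{Tors}(F)$. Recall also that $\mathrm{Preper}(\mathcal{D})=T(f)$ by Proposition \ref{p2.4}, and that $h$ maps $\mathrm{Tors}(F)=T(f_F)$ onto $T(f)$ by Lemma \ref{l3.2}.

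Second, I would run the transport computation. Fix $z\in\mathrm{Preper}(\mathcal{D})$ and choose, by the surjectivity in Lemma \ref{l3.2}, an $\alpha\in\mathrm{Tors}(F)$ with $h(\alpha)=z$. Because $h\in\mathcal{O}_K[[x]]$ has coefficients fixed by $\sigma$, and $\sigma$ is a continuous field automorphism preserving the valuation (so the series $h$ still converges at $\sigma(\alpha)$), one gets the equivariance $\sigma(h(\alpha))=h(\sigma(\alpha))$. Using Theorem \ref{t4.1} this gives $\sigma(z)=h(v_F(\alpha))$. If $v_F$ is the endomorphism $u_F$ of the package, the intertwining relation immediately yields $h(u_F(\alpha))=u(h(\alpha))=u(z)$, so that $w=u\in\mathcal{D}$ does the job and the asserted relation \eqref{e0} holds.

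The step I expect to be the real obstacle is matching the endomorphism $v_F$ delivered by Theorem \ref{t4.1} with an actual member of $\mathcal{D}$, since Theorem \ref{t4.1} produces \emph{some} stable endomorphism and does not let me prescribe it. To handle a general $v_F$ I would seek $w\in\mathcal{O}_K[[x]]$ solving the functional equation $w\circ h=h\circ v_F$; note that $h\circ v_F$ is again an isogeny from $f_F$ to $f$ (it commutes with $f$ through $f_F$), which is precisely what should make it factor as $w\circ h$. Granting such a $w$, I would verify $w\in\mathcal{D}$ through the maximality clause: composing $f\circ h=h\circ f_F$ with $w\circ h=h\circ v_F$ and using that endomorphisms of $F$ commute gives $(f\circ w)\circ h=(w\circ f)\circ h$, and since the substitution map $A\mapsto A\circ h$ is injective on power series (as $h$ has positive order and nonzero leading coefficient), it follows that $f\circ w=w\circ f$. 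As $w$ is stable (its lowest-order behavior can be read off from $w\circ h=h\circ v_F$) and commutes with $f\in\mathcal{D}$, maximality forces $w\in\mathcal{D}$, and then $\sigma(z)=h(v_F(\alpha))=w(h(\alpha))=w(z)$ for all $z\in\mathrm{Preper}(\mathcal{D})$, which is \eqref{e0}.

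The one point needing genuine care is the solvability of $w\circ h=h\circ v_F$ in $\mathcal{O}_K[[x]]$, equivalently the statement that $h\circ v_F$ is a power series in $h$. I would derive this from the rigidity of isogenies between $f_F$ and $f$ (the same circle of ideas underlying Conjecture \ref{c3.1} and Lemma \ref{l3.2}) rather than from a direct coefficient computation, and I would keep the stability check separate, splitting into the cases $h'(0)\neq 0$ and $h'(0)=0$ according to whether the comparison of derivatives at the origin or of Weierstrass-leading terms is the relevant one.
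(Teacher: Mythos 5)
Your proposal follows the same route as the paper's own proof: invoke the package from Conjecture \ref{c3.1}, apply Theorem \ref{t4.1} to the background formal group $F$, and transport the relation $\sigma=v_F$ on $\text{Tors}(F)$ through the isogeny $h$, using the $\sigma$-equivariance of $h$ (its coefficients lie in $\mathcal{O}_K$) and the surjectivity of $h\colon T(f_F)\to T(f)$ from Lemma \ref{l3.2}. Your ``easy case,'' where the endomorphism delivered by Theorem \ref{t4.1} is the package's $u_F$, is literally the paper's argument (equations \eqref{e2}--\eqref{e4}, ending with $w=u$). The difference is that the paper silently assumes the easy case always occurs: it denotes the endomorphism produced by Theorem \ref{t4.1} by $u_F$, thereby notationally identifying it with the endomorphism of the package satisfying $u\circ h=h\circ u_F$, even though Theorem \ref{t4.1} only provides \emph{some} stable endomorphism of $F$ and gives no control over which one. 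You are right to single this out as the real obstacle. Your proposed repair --- observe that $h\circ v_F$ is again an isogeny from $f_F$ to $f$, factor it as $w\circ h$, then deduce $f\circ w=w\circ f$ from the injectivity of $A\mapsto A\circ h$ and invoke the maximality clause to conclude $w\in\mathcal{D}$ --- has the right shape, and the commutation and injectivity steps you spell out are sound (endomorphisms of a one-dimensional formal group over $\mathcal{O}_K$ do commute, and composition with a nonzero $h$ vanishing at $0$ is injective over a domain). What remains unproven, as you yourself acknowledge, is the solvability of $w\circ h=h\circ v_F$ in $\mathcal{O}_K[[x]]$; this follows from nothing stated in the paper and would require a genuine divisibility/kernel argument for isogenies in the spirit of Lubin's or Berger's theory. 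So your write-up is, if anything, more careful than the paper's: where the paper makes an unacknowledged identification, you have an explicitly acknowledged open step, and otherwise the two arguments coincide.
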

	\begin{proof} By the Conjecture \ref{c3.1}, if $f$ and $u$ are two stable noninvertible and invertible power series in $\mathcal{D}$, then there exists a formal group $F$ with coefficients in $\mathcal{O}_K$, two endomorphisms $f_F$ and $u_F$ of $F$, and a nonzero power series $h$ such that $f \circ h=h \circ f_F$ and $u \circ h=h \circ u_F$, where $h$ is the isogeny from $f_F$ to $f$.

		By Lemma \ref{l3.2}, $h$ maps $T(f_F)$ into $T(f)$, and hence for every $\beta \in \text{Tors}(F)$, we get $h(\beta) \in \text{Preper}(\mathcal{D})$. Moreover, by Lemma \ref{l3.2}, we see $h: T(f_F) \to T(f)$ is also surjective. Thus for every $\alpha \in \text{Preper}(\mathcal{D})$ there exists some $\beta \in \text{Tors}(F)$ such that $h(\beta)=\alpha$.
		From the Theorem \ref{t3.1}, we have
		\begin{equation} \label{e2}
		\sigma(z)=u_F(z) \ \text{ for all} \ z \in \text{Tors}(F).
		\end{equation} 
		Now it remains to show that we can replace $u_F$ by an element $w \in \mathcal{D}$ in equation \eqref{e2} such that $w \notin \text{End}_{\mathcal{O}_K}(F)$.  
		Applying the isogeny $h$ both sides of equation \eqref{e2} and using the relation $u \circ h=h \circ u_F$ from Conjecture \ref{c3.1}, we get 
		\begin{align}
			\nonumber\sigma(z)&=u_F(z) \ \text{ for all} \ z \in \text{Tors}(F), \\
			\nonumber	\Rightarrow	h(\sigma(z))&=(h \circ u_F)(z)  \ \text{ for all} \ z \in \text{Tors}(F),  \\
			\label{e3}	\Rightarrow	\sigma(h(z))&= u(h(z) \ \text{ for all} \ z \in \text{Tors}(F), \ (\because \sigma(h(z))=h(\sigma(z)) \\
			\label{e4}	\Rightarrow \sigma(\tilde{z})&=u(\tilde{z}), \ \text{ for all} \ \tilde{z}=h(z) \in \text{Preper}(\mathcal{D}).
		\end{align}
		The relation \eqref{e4} follows from the relation \eqref{e3} because $h$ maps $T(f_F)$ into $T(f)$, by Lemma \ref{l3.2}.
		Finally denoting $w(x):=u(x) \in \text{Preper}(\mathcal{D})$, we are done.
	\end{proof}
	The following example describes a situation when we get a relation like \eqref{e0}:
	\begin{exmp}
		Let $f(x) \in x \cdot \mathcal{O}_K[[x]]$ be a noninvertible and irreducible polynomial of degree 5 with set of zeros  $\Theta=\{r_1,r_2,r_3,r_4, r_5\}$ such that the extension $K(\Theta):=K(r_1,r_2,r_3,r_4,r_5)$ is Galois with Galois group say, $\text{Gal}(K(\Theta)/K)$. Any $\tau \in \text{Gal}(K(\Theta)/K)$ permutes the elements of $\Theta$. Define some $w(x) \in x \cdot \mathcal{O}_K[[x]]$ by $w(x)=x+s(x)f(x)$ for some $s(x) \in \mathcal{O}_K[[x]]$. We claim there exist some $\tau \in \text{Gal}(K(\Theta)/K)$ so that $\tau(r_i)=w(r_i)$ for all $r_i \in \Theta$. \\
		\textbf{Case I:} Suppose $w(x)$ fixes one of $r_i$, then $g(x)-x$ has root $r_i$, and so $f(x)~|~(w(x)-x)$. In this case $w(r_i)=r_i$ for every $i=1,2,3,4,5$, which implies $w(x)$ induces the identity permutaion on the set $\Theta$, that is, for $\tau=\text{Id} \in \text{Gal}(K(\Theta)/K)$ we have $\tau(z)=w(z)$ for all $z \in \Theta$. \\
		\textbf{Case II:} Suppose $w(x)$ do not fix any of $r_i, i=1,2,3,4,5$. Since the splitting field of $f(x)$ is of degree 5, either $w$ induces a permutation $(r_1r_2r_3r_4r_5)$ or a permutation $(r_1r_2r_3)(r_4r_5)$. If $w$ induces the permutation $ (r_1r_2r_3)(r_4r_5)$, then $w^{\circ 2}$ induces the permutation of type $(r_1r_2r_3)(r_4)(r_5)$. This shows $r_4$ and $r_5$ are the fixed points and the permutation is not identity. So by the argument of Case I, this can not happen. Hence $w$ induces the $5$-cycle $(r_1r_2r_3r_4r_5)$. Therefore by repeated composition of $w$ each $r_1,r_2,r_3,r_4,r_5$ can be expressed as a polynomial in $r_1$. In other words, the splitting field is $K(\Theta)=K[r_1]$ of degree $5$.  Now we claim that $w$ induces the same permutation as a power of $\tau$. Without loss of genarality, choose the notation such that $\tau=(r_1r_2r_3r_4r_5)$. Now we have the following subcases: \\ $(i)$ if $w(r_1)=r_2$ then $\tau(w(r_1))=\tau(r_2) \Rightarrow w(\tau(r_1))=r_3 \Rightarrow w(r_2)=r_3$. Applying $\tau$ on both sides of $w(r_2)=r_3$, we get $w(r_3)=r_4$. Once again, applying $\tau$ on $w(r_3)=r_4$, we get $w(r_4)=r_5$. So indeed $w$ induces $\tau$. \\ $(ii)$ if $w(r_1)=r_3$, similarly, $w$ induces $\tau^2$. \\
		$(iii)$ if $w(r_1)=r_4$, similarly, $w$ induces $\tau^3$. \\
		$(iv)$ if $w(r_1)=r_5$, similarly, $w$ induces $\tau^4$. \\
		Finally, since there is a continuous surjection $\operatorname{Gal}(\bar K/K) \twoheadrightarrow \operatorname{Gal}(K(\Theta)/K)$, for the given $\tau$ there exist a $\sigma \in \text{Gal}(\bar K/K)$ such that $\sigma\rvert_{K(\Theta)}=\tau$ so that $\sigma|_{K(\Theta)}(r_i)=w(r_i)$ for all $r_i \in \Theta$.
	\end{exmp}  
	\begin{lem} \label{l4.3}
		Let $\mathcal{D}$ be stable $p$-adic dynamical system over $\mathcal{O}_K$ and $I(x) \in x \cdot \mathcal{O}_K[[x]]$. If $I(z) \in \text{Preper}(\mathcal{D})$ for infinitely many $z$, then $I \in \mathcal{D}$.
	\end{lem}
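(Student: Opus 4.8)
The plan is to convert the set-theoretic hypothesis into a single identity of power series, namely $I \circ w = w \circ I$ for a suitable stable $w \in \mathcal{D}$, and then read off $I \in \mathcal{D}$ from the closure (maximality) property of $\mathcal{D}$, which declares that any stable power series commuting with a member of $\mathcal{D}$ already lies in $\mathcal{D}$. The two ingredients I would marry are the Galois description of $\mathcal{D}$ from Theorem \ref{t4.2} and the \emph{rigidity} principle recorded in the introduction: a power series in $\mathcal{O}_K[[x]]$ that vanishes on an infinite subset of $\mathfrak{m}_{C_p}$ is identically zero.

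First I would apply Theorem \ref{t4.2} to fix a stable series $w(x) \in \mathcal{D}$ together with an automorphism $\sigma \in \text{Gal}(\bar K/K)$ satisfying $\sigma(y) = w(y)$ for every $y \in \text{Preper}(\mathcal{D})$. Since $\sigma$ is an isometry of $\bar K$, it extends to a continuous automorphism of the completion $C_p$; and because $I$ has coefficients in $\mathcal{O}_K \subset K$, which $\sigma$ fixes pointwise, continuity of $\sigma$ and of evaluation yields the Galois-equivariance $\sigma(I(z)) = I(\sigma(z))$ for every $z \in \mathfrak{m}_{C_p}$. This is the bridge that turns the \emph{action of $\sigma$ on points} into the \emph{action of the power series $w$}.

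Next, for each of the infinitely many preperiodic $z$ with $I(z)$ again preperiodic I would chain the two incarnations of $\sigma$:
\begin{equation*}
(I\circ w)(z) = I(w(z)) = I(\sigma(z)) = \sigma(I(z)) = w(I(z)) = (w\circ I)(z),
\end{equation*}
where the outer equalities are the definition of composition, the second and fourth use $\sigma = w$ on $\text{Preper}(\mathcal{D})$ applied to $z$ and to $I(z)$ respectively, and the middle one is the Galois-equivariance of $I$. Hence the power series $(I\circ w)(x) - (w\circ I)(x) \in \mathcal{O}_K[[x]]$ vanishes at infinitely many points of $\mathfrak{m}_{C_p}$, so by rigidity (an infinite subset of $\mathfrak{m}_{C_p}$ is Zariski dense) it is identically zero, i.e. $I \circ w = w \circ I$. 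As $w$ is a stable member of $\mathcal{D}$ commuting with $I$, the closure property of $\mathcal{D}$ then delivers $I \in \mathcal{D}$.

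The main obstacle I anticipate is exactly the requirement that the points $z$ themselves (not merely their images $I(z)$) lie in $\text{Preper}(\mathcal{D})$, since the equality $\sigma(z) = w(z)$ is guaranteed only there; the computation above is the natural reading in which $I$ carries infinitely many preperiodic points back into $\text{Preper}(\mathcal{D})$, matching the unlikely-intersection theme of the paper. Under the weaker hypothesis that $I(z) \in \text{Preper}(\mathcal{D})$ for arbitrary $z$, one still gets $I(\sigma(z)) = w(I(z))$, but extracting a power-series identity then requires controlling $\sigma(z)$, and that is the step needing extra input. A secondary point to settle is nondegeneracy: one must rule out the trivial $I \equiv 0$ (which sends everything to the fixed point $0 \in \text{Preper}(\mathcal{D})$ yet is not stable) and verify that a nonzero $I$ commuting with the stable $w$ is itself stable, so that the closure property of $\mathcal{D}$ genuinely applies.
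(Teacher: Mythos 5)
Your proof is correct and follows essentially the same route as the paper's: invoke Theorem \ref{t4.2} to obtain $\sigma = w$ on $\text{Preper}(\mathcal{D})$, use Galois-equivariance of $I$ (coefficients fixed by $\sigma$) to chain $(I \circ w)(z) = (w \circ I)(z)$ at infinitely many points, apply rigidity (Zariski density of infinite subsets of $\mathfrak{m}_{C_p}$) to get $I \circ w = w \circ I$, and conclude $I \in \mathcal{D}$ from the maximality of $\mathcal{D}$. If anything, you are more careful than the paper, whose proof silently assumes both $z$ and $I(z)$ lie in $\text{Preper}(\mathcal{D})$ and never addresses the stability caveat (ruling out $I \equiv 0$, checking the closure property applies) that you flag at the end.
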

	\begin{proof}
		Since $h$ maps $T(f_F)$ into $T(f)$ by Lemma \ref{l3.2}, Theorem \ref{t4.2} implies there is a $\sigma \in \text{Gal}(\bar K/K)$ and a $w \in \mathcal{D}$ such that $\sigma(z)=w(z)~\forall~z \in \text{Preper}(\mathcal{D})$. If $z \in \text{Preper}(\mathcal{D})$, then we have 
		\begin{align} \label{e5}
			\sigma(I(z))=w(I(z)).
		\end{align}  
		Since $\text{Preper}(\mathcal{D})$ is stable under the action of $\text{Gal}(\bar K/K)$, for all $z \in \text{Preper}(\mathcal{D})$
		\begin{align}
			\sigma(I(z))&=I(\sigma(z))=I(w(z)) \nonumber \\ 
			& \label{e6}\Rightarrow \sigma(I(z))=I(w(z))
		\end{align}
		From equations \eqref{e5} and \eqref{e6}, we have $w(I(z))=I(w(z))~\forall~z \in \text{Preper}(\mathcal{D})$. But since $\text{Preper}(\mathcal{D})$ is infinite, by Zariski dense property, we get $w \circ I=I \circ w$. This shows $I \in \mathcal{D}.$
	\end{proof}
	\begin{proof}[\protect{Alternative proof of Theorem~\ref{t3.4}}]
		By Theorem \ref{t4.2}, there exists an element $\sigma \in \text{Gal}(\bar{K}/K)$ and an stable power series $w_1$ in $\mathcal{D}_1$ such that 
		\begin{align*}
			\sigma(z)=w_1(z) \ \forall \ z \in \text{Preper}(\mathcal{D}_1).
		\end{align*} 
		The set $\mathcal{Z}:=\text{Preper}(\mathcal{D}_1) \cap \text{Preper}(\mathcal{D}_2)$ is stable under the action of $\text{Gal}(\bar K/K)$, and hence for all $z \in \mathcal{Z}$, we have $\sigma(z) \in \mathcal{Z}$. Therefore $w_1(z) \in \mathcal{Z}$ because $\sigma(z)=w_1(z)~\forall~z \in \text{Preper}(\mathcal{D}_1)$. Since $\mathcal{Z} \subset \text{Preper}(\mathcal{D}_2)$ is infinite, by the Lemma \ref{l4.3}, we get $w_1 \in \mathcal{D}_2$. This forces $\mathcal{D}_1=\mathcal{D}_2$.
	\end{proof}
	
	We understand the difficulty of finding a strong example satisfying the statement of the main Theorem \ref{t3.4}, nevertheless, we have produced the following two situations towards justification:
	\begin{exmp}
		We establish our argument rather contrapositively. We claim that there can not be two \enquote{different} stable $p$-adic dynamical systems $\mathcal{D}_1$ and $\mathcal{D}_2$ over $\mathcal{O}_K$ satisfying the statement of Theorem \ref{t3.4}. For, if $\mathcal{D}_1 \neq \mathcal{D}_2$ satisfies $\text{Preper}(\mathcal{D}_1) \cap \text{Preper}(\mathcal{D}_2)$=infinite. Then there exists two noninvertible series $f_1(x),~f_2(x)$ respectively $\mathcal{D}_1,~\mathcal{D}_2$ such that $f_1 \circ f_2 \neq f_2 \circ f_1$. But since $\text{Preper}(\mathcal{D}_1) \cap \text{Preper}(\mathcal{D}_2)$ is infinite, both $f_1-f_2$ vanishes on the infinite set $\text{Preper}(\mathcal{D}_1) \cap \text{Preper}(\mathcal{D}_2)$. Thus by Zariski dense property, we have $f_1=f_2$ and hence $f_1 \circ f_2=f_1^{\circ 2}=f_2 \circ f_1$, which is a contradiction. Thus our claim is established.   
	\end{exmp}
	\begin{exmp} \label{e4.3}
		Consider the noninvertible series $f_{F}(x)=3x+x^3$ over $\mathbb{Z}_3$. It is an endomorphism of a 1-dimensional Lubin-Tate formal group $F$ over $\mathbb{Z}_3$. Our idea is to recoordinatize the endomorphism $f_{F}$ and to form its condensation $\left(f_F\left(x^{\frac{1}{p-1}}\right)\right)^{p-1}$. Define a map $h(x)=x^{p-1}$, $p=3$. Note that  $h \circ f_{F}=f \circ h$, and hence $h$ is an isogeny from $f_{F}$ to $f$.
		Consider a stable $p$-adic dynamical system $\mathcal{D}_1$ over $\mathbb{Z}_3$ consisting of the noninvertible series $f(x):=\left(f_F\left(x^{\frac{1}{2}}\right)\right)^2=9x+6x^2+x^3$ and the invertible series $u(x)=4x+x^2$. It can be checked that $f \circ u=u \circ f$. Here $\Theta_1:=\{0,+\sqrt {-3}, -\sqrt {-3}\} \subset T(f_F)$ and $\Theta_2:= \{0,-3,-3\} \subset T(f)$ are set of zeros of $f_{F}$ and $f$ respectively. Also note, the isogeny $h$ takes $\Theta_1$ to $\Theta_2$ because $h(0)=0,~ h(\sqrt {-3})=3,~ h(-\sqrt{-3})=3$. Clearly $f(x)$ can not be an endomorphism of the formal group $F$ (not even any formal group) because it has repeated root. So the choice $\mathcal{D}_1$ is nontrivial with background formal group $F$ and compatible with respect to the statement of the Theorem \ref{t3.4}, in other word, the dynamical systems in our theorem exists.  
		
		The more difficult is to find another stable $p$-adic dynamical system $\mathcal{D}_2$ satisfying same criteria as $\mathcal{D}_1$. We earnestly hope that any example satisfying the statement to that of Theorem \ref{t3.1} would lead us to find $\mathcal{D}_2$. 
		However, we can create an easier $\mathcal{D}_2$ as follows: 
		
		 For, let us consider another Lubin-Tate formal group $G$ over $\mathbb{Z}_3$ with noninvertible endomorphism $g_G$ satisfying $g_G(x)\equiv 3^2x$ (mod degree 2) and $g_G(x) \equiv x^{3^2}$ (mod $3 \mathbb{Z}_3)$. Such a non-invertible endomorphism is $g_G(x)=9x+30x^3+27x^5+9x^7+{{x}^{9}}$ which commutes an invertible endomorphism $u_G(x)=5x+5x^2+x^5$ such that $g_G(G(x,y))=G(g_G(x),g_G(y))$ and $u_G(G(x,y))=G(u_G(x),u_G(y))$. We form condensation of $g_G(x)$ by the isogeny $h_2(x)=x^2$ respectively as $g(x)=81x+540x^2+1386x^3+1782x^4+1287x^5+546x^6+135x^7+18x^8+x^9$. We have checked that $h_2 \circ g_G=g \circ h_2$ and $h_2$ maps the zeros of $g_G$ into the zeros of $g$. Further, $g$ has double roots $-3$ and hence it can not be an endomorphism of any formal group. We found an invertible series $\tilde{u}(x)=25x+50x^2+35x^3+10x^4+x^5$ such that $g \circ \tilde{u}=\tilde{u} \circ g$. Thus we construct $\mathcal{D}_2$ as a stable $p$-adic dynamical system consisting of the invertible series $\tilde{u}$ and the noninvertible series $g$, whose background formal group is $G$.
		
		Finally, since $\text{Preper}(\mathcal{D}_1)$ and $\text{Preper}(\mathcal{D}_2)$ are independent of choices of stable series in $\mathcal{D}_1$ and $\mathcal{D}_2$ respectively, we can take $\text{Preper}(\mathcal{D}_1)=\text{Preper}(u)$ and $\text{Preper}(\mathcal{D}_2)=T(g)$. But, $u \in \mathcal{D}_1$ commutes with $g \in \mathcal{D}_2$ and hence $\text{Preper}(\mathcal{D}_1) \cap \text{Preper}(\mathcal{D}_2)$ is infinite. On the other hand, $f_F$ commutes with $u_G$ and hence $\text{Tors}(F) \cap \text{Tors}(G)$ is infinite, which implies $F=G$. This says $\mathcal{D}_1=\mathcal{D}_2$.   
	\end{exmp}
	\begin{rem}
		The existence of the nontrivial stable $p$-adic dynamical systems $\mathcal{D}_1$ and $\mathcal{D}_2$ in the above Example \ref{e4.3} supports the Conjecture \ref{c3.1}.
	\end{rem}
	\begin{rem}
		The Theorem \ref{t3.1} deals with the category of stable $p$-adic dynamical systems which are endomorphisms of formal groups while the main Theorem \ref{t3.4} of this paper deals with larger category of stable $p$-adic dynamical systems.  
	\end{rem}
	
	\textbf{Acknowledgement:} The authors would like to thank Jonathan Lubin, Laurent Berger for their insightful comments. The first author acknowledges \textit{The Council Of Scientific and Industrial Research}, Government of India, for the award of Senior Research Fellowship with File no.-09/025(0249)/2018-EMR-I.

\end{document}